\newtheorem{prop}{Proposition}[section]
\newtheorem{lem}[prop]{Lemma}
\newtheorem{thm}[prop]{Theorem}
\newtheorem{cor}[prop]{Corollary}
\DeclareMathAlphabet{\mathpzc}{OT1}{pzc}{m}{it}
\DeclareMathOperator{\Hom}{Hom}
\DeclareMathOperator{\Ind}{Ind}
\DeclareMathOperator{\cInd}{c-Ind}
\DeclareMathOperator{\GL}{GL}
\DeclareMathOperator{\Gal}{Gal}
\DeclareMathOperator{\Sym}{Sym}
\DeclareMathOperator{\Rep}{Rep}
\DeclareMathOperator{\val}{val}
\DeclareMathOperator{\Fil}{Fil}
\DeclareMathOperator{\Supp}{Supp}
\newcommand{\cIndu}[3]{\cInd_{#1}^{#2}{#3}}
\newcommand{\Indu}[3]{\Ind_{#1}^{#2}{#3}}
\newcommand{\KK}{\mathfrak{K}}
\newcommand{\FF}{\mathcal F}
\newcommand{\pF}{\mathfrak{p}_{F}}
\newcommand{\ZZ}{\mathbb{Z}}
\newcommand{\DD}{\mathcal{D}}
\newcommand{\Qpbar}{\overline{\mathbb Q}_p}
\newcommand{\Qp}{\mathbb {Q}_p}
\newcommand{\Zp}{\mathbb{Z}_p}
\newcommand{\oL}{\mathfrak o_L}
\newcommand{\pL}{\mathfrak p_L}
\newcommand{\GG}{\mathcal G}
\author{Vytautas Pa\v{s}k\={u}nas}
\title{On some crystalline representations of $\GL_2(\Qp)$}
\begin{document} 

\maketitle
\begin{abstract}
 We show that  the universal unitary completion of certain locally algebraic representation of  
$G:=\GL_2(\Qp)$ with $p>2$
is non-zero, topologically irreducible, admissible and corresponds to a 
$2$-dimensional crystalline representation with non-semisimple Frobenius  via the $p$-adic Langlands correspondence for $G$.
\end{abstract}  
\section{Introduction}

Let $G:=\GL_2(\Qp)$ and $B$ be the subgroup of upper-triangular matrices in $G$. Let $L$ be a finite extension of $\Qp$. 

\begin{thm}\label{A} Assume that $p>2$, let $k\ge 2$ be an integer  and let $\chi:\Qp^{\times}\rightarrow L^{\times}$ a smooth character with 
$\chi(p)^2 p^{k-1}\in \oL^{\times}$. Assume that there exists a $G$-invariant norm $\|\centerdot\|$ on 
$(\Indu{B}{G}{\chi\otimes \chi|\centerdot|^{-1}})\otimes \Sym^{k-2} L^2$.
Then the completion 
$E$ is a topologically irreducible, admissible Banach space representation of $G$. Moreover, if we let $E^0$ be the unit ball in $E$ then 
$$ V_{k,2\chi(p)^{-1}}\otimes (\chi|\chi|)\cong L\otimes_{\oL} \underset{\longleftarrow} {\lim}\ \mathbf V( E^0/\varpi^n_L E^0),$$
where $\mathbf V$ is Colmez's Montreal functor, and $V_{k, 2\chi(p)^{-1}}$, is a $2$-dimensional irreducible crystalline representation of 
$\mathcal G_{\Qp}$
the absolute Galois group of $\Qp$, with Hodge-Tate weights $(0, k-1)$ and the trace of crystalline Frobenius equal to $2\chi(p)^{-1}$. 
\end{thm}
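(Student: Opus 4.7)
The plan is to identify the universal completion $E$ with the Banach space representation $\Pi(V)$ attached, via the $p$-adic Langlands correspondence of Berger--Breuil--Colmez, to the Galois representation $V := V_{k, 2\chi(p)^{-1}} \otimes (\chi|\chi|)$. Write $\Pi := (\Indu{B}{G}{\chi \otimes \chi|\centerdot|^{-1}}) \otimes \Sym^{k-2} L^2$ for the locally algebraic representation being completed. The hypothesis $\chi(p)^2 p^{k-1} \in \oL^\times$ is precisely the statement that the central character of $\Pi$ is unitary, which is both necessary for a $G$-invariant norm on $\Pi$ to exist and exactly the condition under which Berger--Breuil's construction produces a unitary Banach representation $\Pi(V)$ whose locally algebraic vectors match $\Pi$.

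First I would recall that $\Pi(V)$ is admissible, topologically irreducible, and contains $\Pi$ as a dense $G$-stable locally algebraic subspace, so that the unit ball of its norm restricts to a $G$-stable lattice in $\Pi$. By the universal property of the completion, there is then a continuous $G$-equivariant surjection $E \twoheadrightarrow \Pi(V)$, and in particular $E \ne 0$. The heart of the proof is to match $E$ with $\Pi(V)$ by computing the Montreal functor on $E$.

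To this end, let $\Theta$ be the unit ball of the given invariant norm and consider the smooth $G$-representation $\overline{\Theta} := \Theta/\varpi_L\Theta$ over the residue field of $\oL$. Using Colmez's Montreal functor $\mathbf V$ together with Pa\v{s}k\={u}nas's structural results on the relevant block of smooth mod $p$ representations of $G$ --- in particular the equivalence between a suitable subcategory and a category of continuous $\mathcal G_{\Qp}$-representations --- one computes $\mathbf V(\overline{\Theta})$ and identifies it with the mod $\varpi_L$ reduction of $V$. Inductively lifting this computation to each $\Theta/\varpi_L^n\Theta$ and passing to the inverse limit identifies $L \otimes_{\oL} \varprojlim \mathbf V(E^0/\varpi_L^n E^0)$ with $V$ itself. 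Topological irreducibility of $E$ then follows from the (absolute) irreducibility of $V$ via the full faithfulness of $\mathbf V$, which forces the surjection $E \twoheadrightarrow \Pi(V)$ to be an isomorphism; admissibility transfers from $\Pi(V)$.

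The main obstacle is the detailed analysis of $\overline{\Theta}$. Because the inducing characters of the smooth principal series differ by $|\centerdot|$, the principal series itself is reducible, and $\Pi$ carries a non-split filtration whose graded pieces involve Steinberg- and character-type locally algebraic constituents. One must show that any $G$-invariant $\oL$-lattice in $\Pi$ has reduction of finite length lying in the correct Pa\v{s}k\={u}nas block, and that the non-splitness of this filtration is faithfully recorded, through $\mathbf V$, as the extension class defining $V$. This is the technical heart of the argument and is where the machinery of admissible smooth mod $p$ representations of $\GL_2(\Qp)$ developed by Pa\v{s}k\={u}nas must be brought to bear.
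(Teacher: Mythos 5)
Your outline has a genuine circularity at its core. You take as given the Banach space representation attached to $V:=V_{k,2\chi(p)^{-1}}\otimes\chi|\chi|$ by the $p$-adic Langlands correspondence, together with its admissibility, its topological irreducibility, and the identification of its locally algebraic vectors with $(\Indu{B}{G}{\chi\otimes\chi|\centerdot|^{-1}})\otimes\Sym^{k-2}L^2$. But the situation of the theorem is exactly the one with equal Frobenius eigenvalues ($a_p=2\chi(p)^{-1}$, $a_p^2=4p^{k-1}$ up to units), i.e.\ non-semisimple Frobenius, which is precisely the case \emph{excluded} from the Berger--Breuil construction (they need $\lambda_1\neq\lambda_2$); producing an irreducible admissible completion in this case and matching it with $V$ under $\mathbf V$ is the content of the theorem (Remarque 5.3.5 of Berger--Breuil), so it cannot be ``recalled''. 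The step you call the heart --- computing $\mathbf V(\overline{\Theta})$ for the reduction of an arbitrary invariant lattice $\Theta$ via ``block'' machinery --- is left entirely as a black box, and that machinery is neither available in nor used by this paper. There is also a smaller but real error of direction: the universal property gives maps \emph{out of} the universal completion, whereas your $E$ is the completion with respect to an arbitrary invariant norm; a surjection $E\twoheadrightarrow\Pi(V)$ does not follow, and identifying $E$ with the universal completion itself requires an argument (in the paper: $H_0(\mathcal D)$ is a finitely generated $\oL[G]$-lattice, so its completion is universal by Emerton, and irreducibility of that completion forces the resulting nonzero map to $E$ to be an isomorphism).

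The paper's actual route avoids invoking any Banach representation attached to the equal-eigenvalue point. It builds the Vign\'eras integral structure $\mathcal D$ inside the diagram of $I_c$- and $K_c$-invariants of $\pi\otimes W$, and perturbs the action of $\bigl(\begin{smallmatrix}0&1\\ p&0\end{smallmatrix}\bigr)$ by scalars $x_j\in 1+\pL^{a+j}$ (Lemma \ref{lattice}, Proposition \ref{adjust}) to obtain finitely generated $\oL[G]$-lattices $M_j$ in principal series with \emph{distinct} parameters $\chi\delta_{x_j^{-1}},\chi\delta_{x_j}|\centerdot|^{-1}$, congruent to $M:=H_0(\mathcal D)$ modulo $\pL^{a+j}$. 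To these Berger--Breuil and Colmez's functor apply, giving lattices $T_{k,a_p(j)}$, and the Berger--Li--Zhu congruences (Proposition \ref{congruence}) identify $L\otimes_{\oL}\varprojlim\mathbf V(M/\varpi_L^nM)$ with $V_{k,2\lambda}$ by an equality of trace characters. Admissibility comes from $M\otimes_{\oL}k_L\cong M_j\otimes_{\oL}k_L$ together with Berger's mod-$p$ correspondence and the BLZ computation of the reduction; irreducibility comes from Nakayama plus the fact that $\mathbf V$ of an irreducible mod-$p$ principal series is one-dimensional, contradicting irreducibility of $V_{k,a_p}$. Any repair of your proposal needs either this approximation-and-congruence mechanism or the full strength of Colmez's later results, made explicit rather than assumed.
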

 
As we explain in \S\ref{exist}, the existence of such $G$-invariant norm follows from the recent work of Colmez, \cite{colmez1}.
Our  result  addresses Remarque 5.3.5 in \cite{bergerbreuil}. In other words, 
the  completion $E$  fits into the $p$-adic Langlands correspondence for $\GL_2(\Qp)$. 

The idea is to ``approximate''  $(\Indu{B}{G}{\chi\otimes \chi|\centerdot|^{-1}})\otimes \Sym^{k-2} L^2$ with representations 
 $(\Indu{B}{G}{\chi\delta_x\otimes \chi\delta_{x^{-1}}|\centerdot|^{-1}})\otimes \Sym^{k-2} L^2$, where 
$\delta_x:\Qp^{\times}\rightarrow L^{\times}$ 
is an unramified character with $\delta_x(p)=x\in 1+\pL$. If $x^2\neq 1$ then 
$\chi\delta_x \neq \chi\delta_{x^{-1}}$ and the analog of Theorem \ref{A} is a result of 
Berger-Breuil \cite{bergerbreuil}.  This allows to  deduce admissibility.
This ``approximation'' process relies on the results of Vign\'eras \cite{vig}.  Using Colmez's functor 
$\mathbf V$ we may then transfer the question of irreducibility 
to the Galois side. Here, we use the fact that for  $p>2$ the representation $V_{k, \pm 2 p^{(k-1)/2}}$ sits in the $p$-adic family studied by 
Berger-Li-Zhu in
\cite{bergerlizhu}.  

\textit{Acknowledgements.} I thank Laurent Berger, Christophe Breuil, Ga\"etan Chenevier and Pierre Colmez for answering my questions. I also thank 
Guy Henniart, Ariane M\'ezard and Rachel Ollivier for organizing 'Groupe de Travail  sur les repr\'esentations $p$-adiques de $\GL_2(\Qp)$', where
I learnt about Colmez's functor.  This paper was written when I was visiting IH\'ES and 
Universit\'e Paris-Sud, supported by Deutsche Forschungsgemeinschaft. I would like to thank these institutions. 

\section{Notation}
We fix an algebraic closure $\Qpbar$ of $\Qp$. We let $\val$ be the valuation on $\Qpbar$ such that $\val(p)=1$, and we set $|x|:=p^{-\val(x)}$. Let 
$L$ be a finite extension of $\Qp$ contained in $\Qpbar$, $\oL$ the ring of integers of $L$, $\varpi_L$ a uniformizer, and $\pL$ the maximal ideal of $\oL$.
Given a character $\chi:\Qp^{\times}\rightarrow L^{\times}$ we consider $\chi$ as a character of the absolute Galois group $\GG_{\Qp}$ of $\Qp$ via the 
local class field theory by sending the geometric Frobenius to $p$.  

Let $G:=\GL_2(\Qp)$, $B$ the subgroup of upper-triangular matrices. Given two characters $\chi_1, \chi_2:\Qp^{\times}\rightarrow L^{\times}$ we consider 
$\chi_1\otimes\chi_2$ as a character of $B$, which sends a matrix $\bigl ( \begin{smallmatrix} a & b \\ 0 & d\end{smallmatrix} \bigr )$ to 
$\chi_1(a) \chi_2(d)$. Let $Z$ be the centre of $G$, $K:=\GL_2(\Zp)$, $I:=\bigl (\begin{smallmatrix} \Zp^{\times} & \Zp \\ p\Zp & \Zp^{\times}\end{smallmatrix}\bigr )$
and for $m\ge 1$ we define  
$$K_m:=\begin{pmatrix} 1 +p^m \Zp & p^m \Zp \\ p^m \Zp & 1+p^m \Zp\end{pmatrix}, \quad 
I_m:=\begin{pmatrix}   1 +p^m \Zp & p^{m-1} \Zp \\ p^m \Zp & 1+p^m \Zp\end{pmatrix}.$$ 
Let $\KK_0$ be the $G$-normalizer of $K$, so that $\KK_0=KZ$, and $\KK_1$ the $G$-normalizer of $I$, so that $\KK_1$ is generated as a group 
by $I$ and $\Pi:=\bigl ( \begin{smallmatrix} 0 & 1\\ p & 0\end{smallmatrix}\bigr )$. We note that if $m\ge 1$ then $K_m$ is normal in $\KK_0$ and 
$I_m$ is normal in $\KK_1$. We denote $s:= \bigl ( \begin{smallmatrix} 0 & 1\\ 1 & 0\end{smallmatrix}\bigr )$.

\section{Diagrams}
Let $R$ be a commutative ring, (typically $R=L$, $\oL$ or $\oL/\pL^n$). By a diagram $D$ of $R$-modules, we mean the data $(D_0,D_1,r)$, where 
$D_0$ is a $R[\KK_0]$-module, $D_1$ is $R[\KK_1]$-module and $r: D_1\rightarrow D_0$ is a $\KK_0\cap \KK_1=IZ$-equivariant homomorphism of $R$-modules. 
A morphism $\alpha$ between two diagrams $D$, $D'$ is given by $(\alpha_0,\alpha_1)$, where $\alpha_0: D_0\rightarrow D'_0$ is a morphism of 
$R[\KK_0]$-modules, $\alpha_1: D_1\rightarrow D'_1$ is a morphism of $R[\KK_1]$-modules, and the diagram 
\begin{equation}\label{commute}
\xymatrix{ D_0\ar[r]^{\alpha_0}& D'_0\\ D_1\ar[u]^r\ar[r]^{\alpha_1} & D'_1\ar[u]_{r'} }
\end{equation}
commutes in the category of $R[IZ]$-modules. The condition \eqref{commute} is  important, since one can have two diagrams of $R$-modules $D$ and
$D'$, such that $D_0\cong D'_0$ as $R[\KK_0]$-modules, $D_1\cong D_1'$ as $R[\KK_1]$-modules, however $D\not\cong D'$ as diagrams.    
The diagrams of $R$-modules with the above morphisms  form an abelian category. To a diagram $D$ one may associate a complex of $G$-representations:
\begin{equation}\label{complex}
\cIndu{\KK_1}{G}{D_1\otimes\delta}\overset{\partial}{\longrightarrow} \cIndu{\KK_0}{G}{D_0},
\end{equation}  
where $\delta: \KK_1\rightarrow R^{\times}$ is the character $\delta(g):=(-1)^{\val(\det g)}$; $\cIndu{\KK_i}{G}{D_i}$ denotes the space of functions
$f:G\rightarrow D_i$, such that $f(kg)= k f(g)$, for $k\in \KK_i$ and $g\in G$, and $f$ is supported only on finitely many cosets $\KK_i g$. To 
describe $\partial$, we note that Frobenius reciprocity gives $\Hom_G( \cIndu{\KK_1}{G}{D_1\otimes\delta},\cIndu{\KK_0}{G}{D_0})\cong 
\Hom_{\KK_1}(D_1\otimes \delta, \cIndu{\KK_0}{G}{D_0})$, now $\Indu{IZ}{\KK_1}{D_0}$ is a direct summand of the restriction of 
$\cIndu{\KK_0}{G}{D_0}$ to $\KK_1$, and $\Hom_{\KK_1}(D_1\otimes\delta, \Indu{IZ}{\KK_1}{D_0})\cong \Hom_{IZ}(D_1, D_0)$, since $\delta$ is trivial 
on $IZ$. Composition of  the above maps yields  a map $\Hom_{IZ}(D_1, D_0)\rightarrow \Hom_G( \cIndu{\KK_1}{G}{D_1\otimes\delta},\cIndu{\KK_0}{G}{D_0})$, 
we let $\partial$ be the image of $r$. We define $H_0(D)$ to be the cokernel of $\partial$ and $H_1(D)$ to be the kernel of $\partial$. So we have an
exact sequence of $G$-representations:
\begin{equation}\label{homology}
0\rightarrow H_1(D)\rightarrow \cIndu{\KK_1}{G}{D_1\otimes\delta}\overset{\partial}{\rightarrow} \cIndu{\KK_0}{G}{D_0}\rightarrow H_0(D)\rightarrow 0
\end{equation}
Further, if $r$ is injective then one may show that $H_1(D)=0$, see \cite[Prop. 0.1]{vig}. To a diagram $D$ one may associate a $G$-equivariant coefficient
system $\mathcal V$ of $R$-modules on the Bruhat-Tits tree, see \cite[\S5]{coeff}, then $H_0(D)$ and $H_1(D)$ compute the homology of the coefficient system 
$\mathcal V$ and the map $\partial$ has a natural interpretation. Assume that $R=L$ (or any field of characteristic $0$), 
and let $\pi$ be a smooth irreducible representation of $G$ on an $L$-vector space, so that for all $v\in \pi$ the subgroup 
$\{g\in G: gv=v\}$ is open in $G$. Since the action of $G$ is smooth there exists an $m\ge 0$ such that $\pi^{I_m}\neq 0$. To $\pi$ we may 
associate a diagram $D:=(\pi^{I_m}\hookrightarrow \pi^{K_m})$. As a very special case of a result by Schneider and Stuhler \cite[Thm V.1]{ss}, 
\cite[\S3]{ss1}, we obtain that $H_0(D)\cong \pi$.    
  
We are going to compute such diagrams $D$, attached to smooth principal series representations of $G$ on $L$-vector spaces. Given  smooth 
characters $\theta_1, \theta_2:\Zp^{\times}\rightarrow L^{\times}$ and $\lambda_1, \lambda_2\in L^{\times}$ we define a diagram 
$D(\lambda_1, \lambda_2, \theta_1, \theta_2)$ as follows. Let $c\ge 1$ be an integer, such that $\theta_1$ and $\theta_2$ are trivial 
on $1+ p^c \Zp$. We set $J_c:= (K\cap B)K_c= (I\cap B)K_c$, so that $J_c$ is a subgroup of $I$. We let $\theta: J_c \rightarrow L^{\times}$ be the 
character:
$$\theta(\begin{pmatrix} a & b \\ c & d \end{pmatrix}):=\theta_1(a) \theta_2(d).$$  
We let $D_0:=\Indu{J_c}{K}{\theta}$, and we let $p\in Z$ act on $D_0$ by a scalar $\lambda_1\lambda_2$, so that $D_0$ is a representation of $\KK_0$.
We set $D_1:=D_0^{I_c}$ so that $D_1$ is naturally a representation of $IZ$. We are going to put an action of $\Pi$ on $D_1$, so that $D_1$ is a 
representation of $\KK_1$. Let 
\begin{equation}\label{V1s}
V_1:=\{f\in D_1: \Supp f \subseteq I\}, \quad V_s:= \{f\in D_1: \Supp f \subseteq J_c s I\}.
\end{equation}
 Since $I$ contains $K_1$ 
we have $J_c s I=  (B\cap K) s I= I sI$, hence $D_1=V_1\oplus V_s$. For all $f_1\in V_1$  and $f_s\in V_s$, 
we define $\Pi \centerdot f_1\in V_s$  and $\Pi \centerdot f_s\in V_1$
such that  
\begin{equation}\label{actionPi}
[\Pi \centerdot f_1](s g):= \lambda_1 f_1(\Pi^{-1} g \Pi), \quad [\Pi \centerdot f_s](g)=\lambda_2 f_s(s\Pi g \Pi^{-1}), \quad \forall g\in I;
\end{equation} 
Every $f\in D_1$ can be written uniquely as $f=f_1+ f_s$, with $f_1\in V_1$ and $f_s\in V_s$, and we define $\Pi\centerdot  f:=\Pi\centerdot f_1+
\Pi\centerdot f_s$.

\begin{lem}\label{princdiag} The equation \eqref{actionPi} defines an action of $\KK_1$ on $D_1$. We denote the diagram $D_1\hookrightarrow D_0$
by $D(\lambda_1, \lambda_2, \theta_1, \theta_2)$. Moreover, let $\pi:=\Indu{B}{G}{\chi_1\otimes \chi_2}$ be a smooth principal series representation 
of $G$, with 
$\chi_1(p)=\lambda_1$, $\chi_2(p)=\lambda_2$, $\chi_1|_{\Zp^{\times}}=\theta_1$ and $\chi_2|_{\Zp^{\times}}=\theta_2$.     
There exists an isomorphism of diagrams $D(\lambda_1,\lambda_2, \theta_1, \theta_2)\cong (\pi^{I_c}\hookrightarrow \pi^{K_c})$. In particular, we have a 
$G$-equivariant isomorphism $H_0(D(\lambda_1, \lambda_2, \theta_1, \theta_2))\cong \pi$. 
\end{lem}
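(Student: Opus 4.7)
The plan is to realise $D(\lambda_1,\lambda_2,\theta_1,\theta_2)$ as the pair $(\pi^{I_c}\hookrightarrow\pi^{K_c})$ of invariants attached to $\pi$. Once this identification is in place, well-definedness of the $\KK_1$-action on $D_1$ is automatic (since $I_c$ is normal in $\KK_1$, the group $\KK_1$ acts on $\pi^{I_c}$ inside the ambient $G$-module $\pi$), the diagram isomorphism is by construction, and $H_0(D)\cong\pi$ follows from the Schneider--Stuhler type identification of $H_0$ recalled just above the lemma (which applies to smooth representations generated by their $K_c$-invariants, hence in particular to smooth principal series).

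To construct the identification, I would first apply the Iwasawa decomposition $G=BK$ and Frobenius reciprocity to obtain a $K$-equivariant isomorphism $\pi|_K\cong \Indu{B\cap K}{K}{\chi_1\otimes\chi_2|_{B\cap K}}$. Since $\theta_1,\theta_2$ are trivial on $1+p^c\Zp$, the character $\chi_1\otimes\chi_2$ extends trivially across $K_c$ to a character of $J_c=(B\cap K)K_c$; because $K_c$ is normal in $K$ and $\theta$ is trivial on $K_c$, every $f\in \Indu{J_c}{K}{\theta}$ is automatically $K_c$-right-invariant, and taking $K_c$-invariants yields $\pi^{K_c}\cong \Indu{J_c}{K}{\theta}=D_0$. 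The centre $Z$ acts on $\pi$ by sending $p$ to $\chi_1(p)\chi_2(p)=\lambda_1\lambda_2$, which matches the action of $Z$ on $D_0$ stipulated in the definition, so this is an isomorphism of $\KK_0$-modules. Restricting further gives $\pi^{I_c}=D_0^{I_c}=D_1$ as $IZ$-modules, and the decomposition $D_1=V_1\oplus V_s$ corresponds to $K=I\sqcup IsI$ together with the identity $J_csI=IsI$ (which uses $K_c\subseteq I$).

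It then remains to verify that the $\Pi$-action on $\pi^{I_c}$, transported to $D_1$ via this identification, is given by \eqref{actionPi}. Under $\pi^{K_c}\cong D_0$, an element $f\in D_0$ corresponds to the function $\tilde f\in\pi$ defined by $\tilde f(bk)=\chi_1(a)\chi_2(d)f(k)$ via Iwasawa, and $\Pi$ acts on $\tilde f$ by right translation. Using the identities $\Pi=s\bigl(\begin{smallmatrix}p & 0\\0 & 1\end{smallmatrix}\bigr)=\bigl(\begin{smallmatrix}1 & 0\\0 & p\end{smallmatrix}\bigr)s$, the fact that $\Pi$ normalises $I$, and $\Pi^{-1}g\Pi=\Pi g\Pi^{-1}$ for $g\in I$ (which holds because $\Pi^2=pI_2$ is central), the factorisations $sg\Pi=\bigl(\begin{smallmatrix}p & 0\\0 & 1\end{smallmatrix}\bigr)(\Pi^{-1}g\Pi)$ and $g\Pi=\bigl(\begin{smallmatrix}1 & 0\\0 & p\end{smallmatrix}\bigr)s(\Pi g\Pi^{-1})$ realise $sg\Pi$ and $g\Pi$ as products in $B\cdot K$; evaluating $\tilde f_1\in V_1$ and $\tilde f_s\in V_s$ on them yields $\lambda_1 f_1(\Pi^{-1}g\Pi)$ and $\lambda_2 f_s(s\Pi g\Pi^{-1})$ respectively, which are precisely the formulas of \eqref{actionPi}. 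The main, and essentially only, obstacle is this matrix bookkeeping with the two Iwasawa factorisations; everything else reduces to the standard Iwasawa/Mackey calculus together with the cited $H_0$-identification.
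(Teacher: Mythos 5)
Your proposal is correct and follows essentially the same route as the paper: identify $\pi^{K_c}$ with $\Indu{J_c}{K}{\theta}$ via Iwasawa, split off the two Bruhat/Iwasawa cells to match $V_1\oplus V_s$, and transport the right-translation action of $\Pi$ using the factorisations $\Pi=s\bigl(\begin{smallmatrix}p&0\\0&1\end{smallmatrix}\bigr)=\bigl(\begin{smallmatrix}1&0\\0&p\end{smallmatrix}\bigr)s$ to recover \eqref{actionPi}, with the group-action property and the $H_0$ statement then coming for free from $\KK_1$ acting on $\pi^{I_c}$ and the Schneider--Stuhler identification. The only differences (working with $K=I\sqcup IsI$ instead of $G=BI\cup BsI$, and explicitly noting $\Pi^{-1}g\Pi=\Pi g\Pi^{-1}$) are cosmetic.
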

\begin{proof} We note that $p\in Z$ acts on $\pi$ by a scalar $\lambda_1 \lambda_2$. Since $G=BK$, 
we have $\pi|_K \cong \Indu{B\cap K}{K}{\theta}$, and so  the map $f\mapsto [g\mapsto f(g)]$ induces an isomorphism 
$\iota_0:\pi^{K_c}\cong \Indu{J_c}{K}{\theta}=D_0$. Let $\FF_1:=\{f\in \pi: \Supp f \subseteq BI\}$ and $\FF_s:=\{f\in \pi: \Supp f\subseteq BsI\}$. 
Iwasawa decomposition gives $G=BI \cup BsI$, hence $\pi=\FF_1\oplus \FF_s$.
If $f_1\in \FF_1$ then $\Supp (\Pi f_1)= (\Supp f_1)\Pi^{-1}\subseteq B I \Pi^{-1}= Bs I$. Moreover, 
\begin{equation}\label{act1}
[\Pi f_1](sg)= f_1(sg \Pi)= f_1(s\Pi (\Pi^{-1} g \Pi))= \chi_1(p) f_1(\Pi^{-1} g \Pi), \quad \forall g\in I   
\end{equation}
Similarly, if $f_s\in \FF_s$ then $\Supp (\Pi f_s)=(\Supp f_s)\Pi^{-1}\subseteq  Bs I\Pi^{-1}= BI$, and 
\begin{equation}\label{acts}
[\Pi f_s](g)= f_1(g \Pi)= f_1((\Pi s)s (\Pi^{-1} g \Pi))= \chi_2(p) f_s(s (\Pi^{-1} g \Pi)), \quad \forall g\in I   
\end{equation}  
Now $\pi^{I_c}=\FF_1^{I_c} \oplus \FF_s^{I_c}\subset \pi^{K_c}$. Let $\iota_1$ be the restriction of $\iota_0$ to $\pi^{I_c}$ then  it is 
immediate that $\iota_1(\FF_1^{I_c})=V_1$ and $\iota_1(\FF_s^{I_1})=V_s$, where $V_1$ and $V_s$ are as above. Moreover, if $f\in D_1$ and 
$\Pi\centerdot f$ is given by \eqref{actionPi} then $\Pi \centerdot f= \iota_1( \Pi \iota_1^{-1}(f))$. 
Since $\KK_1$ acts on $\pi^{I_c}$, we get that \eqref{actionPi}
defines an action of $\KK_1$ on $D_1$, such that $\iota_1$ is $\KK_1$-equivariant. Hence, $(\iota_0, \iota_1)$ is an isomorphism of 
diagrams $(\pi^{I_c}\hookrightarrow \pi^{K_c})\cong (D_1\hookrightarrow D_0)$.
\end{proof}   

\section{Main result}
In this section we prove the main result.
\begin{lem}\label{lattice} Let $U$ be a finite dimensional $L$-vector space with subspaces $U_1, U_2$ such that $U=U_1\oplus U_2$. 
For $x\in L$ define a map
$\phi_x: U\rightarrow U$, $\phi_x(v_1+v_2)= xv_1+ v_2$, for all $v_1\in U_1$ and  $v_2\in U_2$. Let $M$ be an $\oL$-lattice in $V$, then 
there exists an integer $a\ge 1$ such that for $x\in 1+\pL^a$ we have $\phi_x(M)=M$.
\end{lem}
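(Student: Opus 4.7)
The plan is to rewrite $\phi_x$ as $\id_U + (x-1) p$, where $p : U \to U$ is the $L$-linear projection onto $U_1$ along $U_2$ (so $p(v_1+v_2) = v_1$). With this reformulation, controlling $\phi_x(M)$ reduces to controlling the submodule $(x-1) p(M)$ inside $M$, which is a much cleaner problem.

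Since $M$ is an $\oL$-lattice in the finite-dimensional $L$-space $U$, it is finitely generated as an $\oL$-module, and hence so is its image $p(M)$ under the $L$-linear map $p$. Any finitely generated $\oL$-submodule of $U$ is bounded for the topology defined by $M$; concretely, one has $p(M) \subseteq \varpi_L^{-N} M$ for some $N \ge 0$. I would therefore set $a := N$, so that $\pL^a p(M) \subseteq M$.

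With this choice of $a$, for every $x \in 1 + \pL^a$ and every $m \in M$ one gets
\[
\phi_x(m) \;=\; m + (x-1) p(m) \;\in\; M + \pL^a p(M) \;\subseteq\; M,
\]
which yields $\phi_x(M) \subseteq M$. For the reverse inclusion, the key observation is that $\phi_x$ is invertible with inverse $\phi_{x^{-1}}$, and that $1+\pL^a$ is a subgroup of $L^\times$ under multiplication (for $a\ge 1$), so $x^{-1}\in 1+\pL^a$ as well. Applying the previous step to $x^{-1}$ gives $\phi_{x^{-1}}(M)\subseteq M$, hence $M = \phi_x\phi_{x^{-1}}(M) \subseteq \phi_x(M)$, and equality follows.

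There is really no serious obstacle here: the only non-formal input is the boundedness of $p(M)$, which in turn is automatic from the fact that $M$ is a lattice and $p$ is $L$-linear on a finite-dimensional space. The lemma is essentially a continuity statement for the action $x \mapsto \phi_x$ on the (discrete) set of $\oL$-lattices in $U$.
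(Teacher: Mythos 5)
Your argument is correct and is essentially the paper's own proof: both control the $U_1$-component of elements of $M$ (you via the boundedness $p(M)\subseteq \varpi_L^{-N}M$ of the projection, the paper via comparing the image of $M$ in $U/U_2$ with $(M\cap U_1)+U_2$) to get $\phi_x(M)\subseteq M$, and both then obtain the reverse inclusion by applying the same estimate to $x^{-1}\in 1+\pL^a$. The only cosmetic adjustment is to take $a:=\max(N,1)$ so that $a\ge 1$ as required (which also guarantees $1+\pL^a$ is a group), and this changes nothing in the argument.
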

\begin{proof} Let $N$ denote the image of $M$ in $U/U_2$. Then $N$ contains $(M\cap U_1) + U_2$, and both are lattices in $U/U_2$.
Let $a\ge 1$ be the smallest integer, such that 
$\pL^{-a} (M\cap U_1) + U_2$ contains $N$. Suppose that $x\in 1+\pF^a$ and $v\in M$. We may write $v=\lambda v_1 +v_2$, with $v_1\in M\cap U_1$, 
$v_2\in U_2$ 
and $\lambda\in \pL^{-a}$. Now $\phi_x(v)= v + \lambda (x-1) v_1\in M$. Hence we get $\phi_x(M)\subseteq M$ and $\phi_{x^{-1}}(M)\subseteq M$. Applying 
$\phi_{x^{-1}}$ to the first inclusion gives $M\subseteq \phi_{x^{-1}}(M)$.
\end{proof} 

We fix an integer $k\ge 2$ and set $W:=\Sym^{k-2} L^2$, an algebraic representation of $G$. Let 
$\pi:=\pi(\chi_1, \chi_2):=\Indu{B}{G}{\chi_1\otimes \chi_2}$ be a
smooth principal series $L$-representation of $G$. We say that $\pi\otimes W$ admits a $G$-invariant norm, if there exists a norm 
$\|\centerdot\|$ on $\pi\otimes W$, with respect to which $\pi\otimes W$ is a normed $L$-vector space, such that  $\|g v\|=\|v\|$, 
for all $v\in \pi\otimes W$ and $g\in G$. 

Let $c\ge 1$ be an integer such that both $\chi_1$ and $\chi_2$ are trivial on $1+p^c \Zp$. Let $D$ be the diagram 
$\pi^{I_c}\otimes W\hookrightarrow \pi^{K_c}\otimes W$. Since  
$H_0(\pi^{I_c}\hookrightarrow \pi^{K_c})\cong \pi$, by tensoring \eqref{complex} with $W$ we obtain $H_0(D)\cong \pi\otimes W$. 
Assume that $\pi\otimes W$ admits a $G$-invariant  norm $\|\centerdot\|$, set $(\pi\otimes W)^0:=\{v\in \pi\otimes W: \|v\|\le 1\}$. 
Then we may define a diagram $\mathcal D=(\mathcal D_1\hookrightarrow \mathcal D_0)$ of $\oL$-modules:
$$\mathcal D:=( (\pi^{I_c}\otimes W)\cap (\pi\otimes W)^0\hookrightarrow (\pi^{K_c}\otimes W)\cap (\pi\otimes W)^0 ).$$ 
In this case Vign\'eras \cite{vig} has shown that  the inclusion $\mathcal D \hookrightarrow D$ induces a $G$-equivariant  
injection 
$H_0(\mathcal D)\hookrightarrow H_0(D)$, such that $H_0(\mathcal D)\otimes_{\oL} L= H_0(D)$; $H_1(\mathcal D)=0$. 
Moreover, $H_0(\mathcal D)$ does not contain an $\oL$-submodule isomorphic to $L$, see \cite[Prop 0.1]{vig}. 
Since $H_0(D)$ is an $L$-vector space of countable dimension, this implies that $H_0(\mathcal D)$ is a free $\oL$-module. By tensoring 
\eqref{complex} with $\oL/\pL^n$ we obtain 
\begin{equation}\label{Hmod}
H_0(\mathcal D)\otimes_{\oL} \oL/\pL^n\cong H_0(\mathcal D\otimes_{\oL} \oL/\pL^n).
\end{equation}

\begin{prop}\label{adjust} Let $\pi=\pi(\chi_1,\chi_2)$ be a smooth principal series representation, assume that 
$\pi\otimes W$ admits a $G$-invariant norm and let $\mathcal D$ be as above. 
Then there exists an integer $a\ge 1$ such that for all $x\in 1+\pF^b$, with $b\ge a$, there
exists a finitely generated $\oL[G]$-module $M$ in $\pi(\chi_1\delta_{x^{-1}}, \chi_2\delta_{x})\otimes W$, which 
is free as an $\oL$-module and a $G$-equivariant isomorphism 
$$M\otimes_{\oL}\oL/\pL^b \cong H_0(\mathcal D)\otimes_{\oL}  \oL/\pL^b,$$
where $\delta_x: \Qp^{\times}\rightarrow L^{\times}$ is an unramified character with $\delta_x(p)=x$.
\end{prop}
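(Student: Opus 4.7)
The plan is to transport the integral diagram $\mathcal D$ to a legitimate integral diagram inside the diagram attached to the twisted principal series $\pi^x := \pi(\chi_1\delta_{x^{-1}}, \chi_2\delta_x)$, take $M$ to be the image of its $H_0$ inside $\pi^x\otimes W$, and verify that the transport preserves the reduction modulo $\pL^b$. By Lemma~\ref{princdiag}, the diagram $(\pi^{I_c}\otimes W\hookrightarrow\pi^{K_c}\otimes W)$ is isomorphic to $D(\lambda_1,\lambda_2,\theta_1,\theta_2)\otimes W$ with $\lambda_i:=\chi_i(p)$ and $\theta_i:=\chi_i|_{\Zp^\times}$, while the twisted diagram is $D(\lambda_1 x^{-1},\lambda_2 x,\theta_1,\theta_2)\otimes W$. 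The underlying $\KK_0$-module $D_0\otimes W$ and $IZ$-module $D_1\otimes W$ depend only on the product $\lambda_1\lambda_2$ and on $(\theta_1,\theta_2)$, all preserved by the twist, and are therefore canonically identified on the two sides. By \eqref{actionPi}, only the $\Pi$-action on $D_1\otimes W$ changes: the twisted $\Pi$ equals $\Psi\circ\Pi$, where $\Psi$ scales by $x$ on $V_1\otimes W$ and by $x^{-1}$ on $V_s\otimes W$.

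I would apply Lemma~\ref{lattice} to $U=D_1\otimes W$, $U_1=V_1\otimes W$, $U_2=V_s\otimes W$, with the $\oL$-lattice $\mathcal D_1$ (which is a full lattice in $D_1\otimes W$ because $\pi^{K_c}\otimes W$ is finite-dimensional over $L$), to obtain an integer $a\ge 1$. For $x\in 1+\pL^b$ with $b\ge a$, one has $x^2\in 1+\pL^b\subseteq 1+\pL^a$, so writing $\Psi=x^{-1}\phi_{x^2}$ in the notation of Lemma~\ref{lattice} gives $\Psi(\mathcal D_1)=\mathcal D_1$ (the factor $x^{-1}\in\oL^\times$ trivially preserves any lattice). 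Hence $\mathcal D_1$ is stable under the twisted $\KK_1$-action, and $\mathcal D^x:=(\mathcal D_1\hookrightarrow\mathcal D_0)$ is a legitimate integral diagram inside $D^x:=((\pi^x)^{I_c}\otimes W\hookrightarrow(\pi^x)^{K_c}\otimes W)$. Moreover $\Psi\equiv\id\pmod{\pL^b}$, so the reductions of $\mathcal D^x$ and $\mathcal D$ modulo $\pL^b$ coincide as diagrams over $\oL/\pL^b$, and the analog of \eqref{Hmod} yields $H_0(\mathcal D^x)\otimes\oL/\pL^b\cong H_0(\mathcal D)\otimes\oL/\pL^b$.

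Finally, I would take $M$ to be the image of the natural map $H_0(\mathcal D^x)\to H_0(D^x)=\pi^x\otimes W$. Then $M$ is $\oL$-free (sitting inside an $L$-vector space) and finitely generated as an $\oL[G]$-module by the image of $\mathcal D_0$. The desired isomorphism $M\otimes\oL/\pL^b\cong H_0(\mathcal D)\otimes\oL/\pL^b$ will follow from the previous step provided that the map $H_0(\mathcal D^x)\to\pi^x\otimes W$ is injective, equivalently that $H_0(\mathcal D^x)$ has no $\oL$-torsion. This is the main obstacle. To resolve it, I would invoke Berger--Breuil~\cite{bergerbreuil}: for $x\ne 1$ the hypothesis $p>2$ forces $x\ne -1$, so $\chi_1\delta_{x^{-1}}\ne\chi_2\delta_x$, and hence $\pi^x\otimes W$ admits a canonical $G$-invariant norm. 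Applying Vigneras's result (as recalled before the proposition) to the integral diagram of this norm produces a torsion-free $H_0$ embedded in $\pi^x\otimes W$, and a comparison with $\mathcal D^x$, enlarging $a$ if necessary so that the two lattices agree modulo $\pL^b$, yields the required torsion-freeness of $H_0(\mathcal D^x)$. The case $x=1$ is trivial.
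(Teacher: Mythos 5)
Your approach tracks the paper's proof quite closely for most of the argument: same Lemma~\ref{lattice}, same idea that the twisted diagram differs only in the $\Pi$-action, and same reduction mod $\pL^b$. Your bookkeeping is also correct, and your $\Psi=x^{-1}\phi_{x^2}$ is just a rewriting of the paper's conjugation $\Pi\star v=\phi_x(\Pi\phi_x^{-1}(v))$: one readily checks $\phi_x\Pi\phi_x^{-1}=\Psi\circ\Pi$, and the paper's formulation has the advantage that stability of $\mathcal D_1$ is immediate, since both $\phi_x$ and $\Pi$ preserve $\mathcal D_1$, with no need to pass to $x^2$.

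The genuine gap is in your last paragraph. Having correctly identified that you need $H_0(\mathcal D^x)\hookrightarrow\pi^x\otimes W$ to be injective (equivalently, $H_0(\mathcal D^x)$ torsion-free and a lattice), you try to import this from Berger--Breuil via a lattice comparison, and that step does not go through. First, the integral diagram produced by a Berger--Breuil norm on $\pi^x\otimes W$ is a completely unrelated lattice inside $D^x$; there is no mechanism by which ``enlarging $a$'' makes it agree with $\mathcal D^x$ modulo $\pL^b$, since $a$ controls only which twists $x$ are allowed, not the Berger--Breuil unit ball. Second, even if the two lattices did agree modulo $\pL^b$, that congruence would not transfer torsion-freeness: $\oL$-torsion in $H_0$ is not a property detectable modulo a fixed power of $\pL$. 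The paper avoids this entirely: Vign\'eras' criterion (\cite[Prop. 0.1]{vig}) applies directly to the diagram $\mathcal D(x)$, because by construction $\mathcal D(x)_1=\mathcal D_0\cap D(x)_1$ and $\mathcal D(x)_1$ is stable under the new $\KK_1$-action; this gives at once that $M:=H_0(\mathcal D(x))$ is a free $\oL$-module, finitely generated as $\oL[G]$-module, embedding in $\pi^x\otimes W$ with $M\otimes_{\oL}L\cong\pi^x\otimes W$. No $G$-invariant norm on the twisted side is needed, and no appeal to Berger--Breuil is required at this stage.
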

\begin{proof} Apply Lemma \ref{lattice} to $U=D_1$, $U_1=V_1\otimes W$, $U_2= V_{s}\otimes W$ and $M=\mathcal D_1$, where $V_1$ and $V_s$ are given by 
\eqref{V1s}. Then we get an integer $a\ge 1$, such that
for all $x\in 1+\pL^a$, $\phi_{x}(\mathcal D_1)=\mathcal D_1$. It is immediate that $\phi_x$ is $IZ$-equivariant. We define a new action $\star$ of 
$\Pi$ on $D_1$, by 
setting $\Pi\star v: = \phi_x (\Pi \phi_x^{-1}(v))$. This gives us a new diagram $D(x)$, so that $D(x)_0=D_0$ as a representation of 
$\KK_0$, $D(x)_1=D_1$ as a representation
 of $IZ$, the $IZ$-equivariant injection $D(x)_1\hookrightarrow D(x)_0$ is equal to the $IZ$-equivariant injection $D_1\hookrightarrow D_0$, 
but the action of $\Pi$ 
on $D_1$ is given by $\star$, (here by $=$ we really mean an equality, not an isomorphism). 
If $f_1\in V_1$ and $f_s\in V_s$ then 
$$\Pi\star (f_1\otimes w)= f'_s\otimes (\Pi w), \quad \Pi\star (f_s\otimes w)= f_1'\otimes (\Pi w), \quad \forall w\in W,$$
where $f'_s\in V_s$, $f'_1\in V_1$ and for all $g\in I$ we have:
\begin{equation}\label{checkstar1} 
f_s'(s g)= x^{-1} [\Pi\centerdot f_1](sg)= x^{-1} \lambda_1 f_1(\Pi^{-1} g \Pi),
\end{equation}
\begin{equation}\label{checkstars}
 f_1'(g)=x [\Pi \centerdot f_s](g)=x\lambda_2 f_s(s\Pi g \Pi^{-1}).
\end{equation}
Hence, we have an isomorphism of diagrams $D(x)\cong D(x^{-1}\lambda_1, x\lambda_2, \theta_1, \theta_2)$ and so Lemma \ref{princdiag} gives
$H_0(D(x))\cong \pi(\chi_1\delta_{x^{-1}}, \chi_2\delta_x)\otimes W$. Now, let $b\ge a$ be an integer and suppose that $x\in 1+\pL^b$.
Since, $\Pi \centerdot \mathcal D_1=\phi_x(\mathcal D_1)=\phi_x^{-1}(\mathcal D_1)=\mathcal D_1$ we get 
$$\Pi\star (\mathcal D_0 \cap D_1)= \Pi\star \mathcal D_1= \phi_x(\Pi \phi_x^{-1}(\mathcal D_1))= \mathcal D_1.$$      
So if we let $\mathcal D(x)_0:=\mathcal D_0$ and $\mathcal D(x)_1:= \mathcal D(x)_0\cap D(x)_1$, where $\Pi$ acts on $\mathcal D(x)_1$ by $\star$ 
then the diagram 
$\mathcal D(x):= (\mathcal D(x)_1\hookrightarrow \mathcal D(x)_0)$ is an integral structure in $D(x)$ in the sense of \cite{vig}. 
The results of Vign\'eras cited above imply 
that $M:=H_0(\mathcal D(x))$ is a finitely generated $\oL[G]$-submodule of $\pi(\chi_1\delta_{x^{-1}}, \chi_2\delta_x)\otimes W$, which is 
free as an $\oL$-module, and 
$M\otimes_{\oL} L\cong \pi(\chi_1\delta_{x^{-1}},\chi_2\delta_x)\otimes W$. Moreover, since $\phi_x$ is the identity modulo $\pL^b$, we have 
$\Pi\star v \equiv \Pi\centerdot v \pmod{\varpi_L^b \mathcal D_1}$, for all $v\in \mathcal D_1$ and so 
the identity map $\mathcal D(x)_0 \rightarrow 
\mathcal D_0$ induces an isomorphism of diagrams $\mathcal D(x)\otimes_{\oL} \oL/\pL^b \cong \mathcal D\otimes_{\oL} \oL/\pL^b$. Now \eqref{Hmod} gives 
$H_0(\mathcal D)\otimes_{\oL}\oL/\pL^b\cong M\otimes_{\oL} \oL/\pL^b$. 
\end{proof}

Let $k\ge 2$ be an integer and $a_p\in \pL$, following Breuil \cite{breuil2} we define a filtered $\varphi$-module $D_{k,a_p}$: $D$ is a $2$-dimensional 
$L$-vector space with basis $\{e_1, e_2\}$, an $L$-linear automorphism $\varphi: D \rightarrow D$, given by 
$$\varphi(e_1)=p^{k-1} e_2, \quad \varphi(e_2)= -e_1+ a_p e_2;$$
a decreasing filtration $(\Fil^i D)_{i\in \ZZ}$ by $L$-subspaces, such that if $i\le 0$ then  $\Fil^i D=D$, if $1\le i\le k-1$ then $\Fil^i D=Le_1$, if $i\ge k$ 
then $\Fil^i D=0$. We set $V_{k,a_p}:= \Hom_{\varphi, \Fil^{\centerdot}}(D_{k,a_p}, B_{cris})$. Then $V_{k,a_p}$ is a $2$-dimensional $L$-linear absolutely irreducible 
crystalline representation of $\GG_{\Qp}:=\Gal(\Qpbar/\Qp)$ with Hodge-Tate weights $0$ and $k-1$. We denote by $\chi_{k,a_p}$ the trace character of $V_{k,a_p}$.
Since $\GG_{\Qp}$ is compact and the action is continuous, $\GG_{\Qp}$ stabilizes some $\oL$-lattice in $V_{k,a_p}$ and so $\chi_{k,a_p}$ takes values in $\oL$.

\begin{prop}\label{congruence} Let $m$ be the largest integer such that $m\le (k-2)/(p-1)$. Let $a_p, a'_p\in \pL$, and assume that 
$\val(a_p)> m$, $\val(a'_p)> m$. Let $n\ge em$ 
be an integer, where $e:=e(L/\Qp)$ is the ramification index. Suppose that $a_p\equiv a'_p \pmod{\pL^n}$, then 
$\chi_{k,a_p}(g)\equiv \chi_{k,a'_p}(g)\pmod{\pL^{n-em}}$ for all $g\in \GG_{\Qp}$.
\end{prop}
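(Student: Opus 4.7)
The plan is to deduce this continuity of traces from a corresponding continuity statement for integral $(\varphi,\Gamma)$-modules, specifically the explicit Wach module description of $V_{k,a_p}$ given by Berger--Li--Zhu in \cite{bergerlizhu}. Under the standing assumption $\val(a_p)>m$, they construct a Wach module $N(T_{k,a_p})$, free of rank two over $\oL[[\pi]]$, carrying commuting actions of $\varphi$ and $\Gamma=\Gal(\Qp(\mu_{p^\infty})/\Qp)$ described by matrices whose entries are power series in $\pi$ with coefficients in $\oL$, and recovering the filtered $\varphi$-module $D_{k,a_p}$ after inverting $p$. Via Fontaine's equivalence, applied to $N(T_{k,a_p})$, this produces a $\GG_{\Qp}$-stable $\oL$-lattice $T_{k,a_p}\subset V_{k,a_p}$ from which the trace character $\chi_{k,a_p}$ can be read off.

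First, I would recall the explicit shape of the matrices in \cite{bergerlizhu}: on a suitable basis, $\varphi$ is given by a matrix that is polynomial in $a_p$ and in the cyclotomic polynomial $q(\pi)$, while the matrix $G(\gamma)$ of each $\gamma\in\Gamma$ is determined inductively by the commutation relation $\gamma\varphi=\varphi\gamma$ together with the normalisation that $\Gamma$ acts trivially on $N/\pi N$. The only denominators that appear come from inverting certain quantities involving $a_p$; the hypothesis $\val(a_p)>m$ is exactly what keeps the resulting entries in $\oL[[\pi]]$, at the cost of an overall denominator of $p$-adic valuation at most $m$, i.e.\ of $\pL$-adic valuation at most $em$.

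Second, I would inspect these formulae to see that the entries of $G(\gamma)$ and of the matrix of $\varphi$ depend polynomially on $a_p$, divided by a fixed denominator whose $\pL$-adic valuation is at most $em$. Consequently, if $a_p\equiv a_p'\pmod{\pL^n}$ with $n\ge em$, then the matrices describing $\varphi$ and each $\gamma\in\Gamma$ on $N(T_{k,a_p})$ and on $N(T_{k,a_p'})$ are congruent modulo $\pL^{n-em}$. The functoriality of the equivalence from Wach modules to $\oL$-lattices then yields a $\GG_{\Qp}$-equivariant isomorphism
\[ T_{k,a_p}\otimes_{\oL}\oL/\pL^{n-em}\;\cong\;T_{k,a_p'}\otimes_{\oL}\oL/\pL^{n-em}. \]
Taking the trace of the action of an arbitrary $g\in\GG_{\Qp}$ on both sides gives $\chi_{k,a_p}(g)\equiv\chi_{k,a_p'}(g)\pmod{\pL^{n-em}}$, as required.

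The main obstacle is accurate bookkeeping of the loss of precision $em$. This requires going through the inductive construction of the $\Gamma$-action on $N(T_{k,a_p})$ in \cite{bergerlizhu} and checking that under $\val(a_p)>m$ the only denominators encountered have $p$-adic valuation at most $m$; the factor $e$ then appears purely from converting $p$-adic to $\pL$-adic valuation. Once this precision is correctly tracked, the continuity of the trace is immediate from continuity of the matrix coefficients.
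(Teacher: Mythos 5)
Your proposal follows essentially the same route as the paper: both deduce the trace congruence from the Berger--Li--Zhu congruence $T_{k,a_p}\otimes_{\oL}\oL/\pL^{n-em}\cong T_{k,a'_p}\otimes_{\oL}\oL/\pL^{n-em}$ of the integral lattices constructed in \cite{bergerlizhu}. The only difference is that the paper simply cites this congruence as Remark~4.1.2~(2) of \cite{bergerlizhu}, whereas you re-sketch its proof via the explicit Wach module matrices; that bookkeeping is exactly what \cite{bergerlizhu} carry out, so you are unpacking a black box rather than arguing differently.
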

\begin{proof} This a consequence of a result of Berger-Li-Zhu \cite{bergerlizhu}. In \cite{bergerlizhu} they construct $\GG_{\Qp}$-invariant lattices $T_{k,a_p}$ in 
$V_{k,a_p}$. The assumption  $a_p\equiv a'_p \pmod{\pL^n}$ implies $T_{k,a_p}\otimes_{\oL} \oL/\pL^{n-em}\cong T_{k,a'_p}\otimes_{\oL}\oL/\pL^{n-em}$, 
see Remark 4.1.2 (2) in 
\cite{bergerlizhu}. This implies the congruences of characters.   
\end{proof}    

Let $k\ge 2$ be an integer and  $\lambda_1, \lambda_2\in L$, such that $\lambda_1+\lambda_2=a_p$ and $\lambda_1\lambda_2=p^{k-1}$ 
(enlarge $L$ if necessary). Assume that 
$\val(\lambda_1)\ge \val(\lambda_2)>0$. Let $\chi_1, \chi_2:\Qp^{\times}\rightarrow L^{\times}$ be unramified characters, with 
$\chi_1(p)=\lambda_1^{-1}$ and 
$\chi_2(p)=\lambda_2^{-1}$, let  $M$ be a finitely generated $\oL[G]$-module in $\pi(\chi_1, \chi_2|\centerdot|^{-1})\otimes W$, where 
$W:=\Sym^{k-2}L^2$. If $\lambda_1\neq \lambda_2$ 
then Berger-Breuil have shown that the unitary $L$-Banach space representation of $G$:
$$E_{k,a_p}:= L\otimes_{\oL} \underset{\longleftarrow}{\lim} \ M/\varpi_L^n M $$ 
is non-zero, topologically irreducible, admissible in the sense of \cite{scht}, and contains $\pi(\chi_1, \chi_2|\centerdot|^{-1})\otimes W$ as a dense $G$-invariant 
subspace, \cite[\S5.3]{bergerbreuil}. Moreover, the dual of $E_{k,a_p}$ is isomorphic to the representation of Borel subgroup $B$  constructed from the 
$(\varphi,\Gamma)$-module of $V_{k,a_p}$.

Let $\Rep_{\oL} G$ be the category of finite length $\oL[G]$-modules  with a central character, such that the action of $G$ is smooth (i.e. the stabilizer of 
a vector is an open subgroup of $G$.) Let $\Rep_{\oL}{\GG_{\Qp}}$ be the category of continuous representations of $\GG_{\Qp}$ on $\oL$-modules of finite length. 
Colmez in \cite[IV.2.14]{colmez1} has defined an exact covariant functor $\mathbf V: \Rep_{\oL} G \rightarrow \Rep_{\oL}\GG_{\Qp}$. 
The constructions in \cite{bergerbreuil} and \cite{colmez1} are mutually inverse to one another. 
This means if we assume $\lambda_1\neq \lambda_2$ and let $M$ be as above, then 
\begin{equation}\label{vkap}
V_{k,a_p}\cong L\otimes_{\oL} \underset{\longleftarrow}{\lim}\ \mathbf V(M/\varpi_L^n M).
\end{equation}
The fact that  $M/\varpi_L^n M$ is an $\oL[G]$-module of finite length follows from \cite[Thm A]{berger}. 

\begin{thm}\label{main} Assume that $p>2$, and let $\lambda=\pm p^{(k-1)/2}$, and $\chi:\Qp^{\times}\rightarrow L^{\times}$ a smooth character, with 
$\chi(p)=\lambda^{-1}$. Assume that there exists a $G$-invariant norm $\|\centerdot\|$ on 
$\pi(\chi, \chi|\centerdot|^{-1})\otimes W$, where $W:=\Sym^{k-2} L^2$. Let $E$ be the completion of $\pi(\chi, \chi|\centerdot|^{-1})\otimes W$ with respect to $\|\centerdot\|$. Then 
$E$ is non-zero, topologically irreducible, admissible Banach space representation of $G$. Moreover, if we let $E^0$ be the unit ball in $E$ then 
$$ V_{k,2\lambda}\otimes (\chi|\chi|)\cong L\otimes_{\oL} \underset{\longleftarrow} {\lim}\ \mathbf V( E^0/\varpi^n_L E^0).$$
\end{thm}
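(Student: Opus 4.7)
The plan is to approximate the ``degenerate'' representation $\pi\otimes W$ with $\pi:=\pi(\chi,\chi|\centerdot|^{-1})$ by non-degenerate nearby representations $\pi_b\otimes W$, with $\pi_b:=\pi(\chi\delta_{x_b^{-1}},\chi\delta_{x_b}|\centerdot|^{-1})$ and $x_b^2\neq 1$, apply the Berger-Breuil theorem in the generic case, and transfer back through Colmez's functor $\mathbf V$ together with the Berger-Li-Zhu congruences of Proposition \ref{congruence}. To set up, let $\mathcal D$ be the integral diagram attached to $(\pi\otimes W,\|\centerdot\|)$ above Proposition \ref{adjust}; by Vign\'eras, $H_0(\mathcal D)$ is a finitely generated $G$-stable $\oL$-lattice in $\pi\otimes W$ whose $\varpi_L$-adic completion is commensurable with $E^0$, so it suffices to compute $L\otimes_{\oL}\varprojlim_n\mathbf V(H_0(\mathcal D)/\varpi_L^n H_0(\mathcal D))$. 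For each $b\ge a$ (with $a$ from Proposition \ref{adjust}) pick $x_b\in 1+\pL^b$ with $x_b^2\neq 1$, which is possible since $p>2$ implies $2\in\oL^{\times}$. Proposition \ref{adjust} then furnishes a finitely generated $\oL$-free $\oL[G]$-submodule $M_b\subset \pi_b\otimes W$ with
\begin{equation*}
M_b/\varpi_L^b M_b\cong H_0(\mathcal D)/\varpi_L^b H_0(\mathcal D).
\end{equation*}

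Since $\chi\delta_{x_b^{-1}}\neq\chi\delta_{x_b}$, the pair $(\chi\delta_{x_b^{-1}},\chi\delta_{x_b})$ becomes unramified after twisting by $\chi^{-1}\circ\det$, so Berger-Breuil \cite{bergerbreuil} applies: the Banach completion of $L\otimes_{\oL}M_b$ is admissible and topologically irreducible, each $M_b/\varpi_L^n M_b$ is of finite length over $\oL[G]$ by \cite[Thm.\,A]{berger}, and the isomorphism \eqref{vkap} --- together with the fact that twisting $M_b$ by $\chi\circ\det$ corresponds under $\mathbf V$ to the twist by $\chi|\chi|$ --- yields
\begin{equation*}
V_{k,a_p(x_b)}\otimes(\chi|\chi|)\cong L\otimes_{\oL}\varprojlim_n\mathbf V(M_b/\varpi_L^n M_b),\quad a_p(x_b):=\lambda(x_b+x_b^{-1}).
\end{equation*}
Exactness of $\mathbf V$ and the isomorphism of Proposition \ref{adjust} together give $\mathbf V(H_0(\mathcal D)/\varpi_L^b H_0(\mathcal D))\cong\mathbf V(M_b/\varpi_L^b M_b)$, identifying the left side with a finite quotient of a $\GG_{\Qp}$-stable lattice in $V_{k,a_p(x_b)}\otimes(\chi|\chi|)$. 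Commensurability of $\hat H_0(\mathcal D)$ with $E^0$ transports the finite length of $M_b/\varpi_L^n M_b$ to $E^0/\varpi_L^n E^0$, and since simple smooth $k_L[G]$-representations of $G$ are admissible, this gives admissibility of $E$.

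To pass to the limit, note that $a_p(x_b)-2\lambda=\lambda(x_b-1)^2 x_b^{-1}\in\lambda\pL^{2b}$, and $\val(a_p(x_b))=(k-1)/2>m$ for every $p\ge 3$ and $k\ge 2$ (a direct check from $m\le (k-2)/(p-1)$), so Proposition \ref{congruence} yields $\chi_{k,a_p(x_b)}\equiv\chi_{k,2\lambda}\pmod{\pL^{2b-em}}$ for $b$ sufficiently large. Since both $V_{k,a_p(x_b)}$ and $V_{k,2\lambda}$ are absolutely irreducible (Berger-Li-Zhu), the lattice produced by Colmez inside $V_{k,a_p(x_b)}\otimes(\chi|\chi|)$ is homothetic to the Berger-Li-Zhu lattice $T_{k,a_p(x_b)}$, so \cite[Rem.\,4.1.2\,(2)]{bergerlizhu} provides $T_{k,a_p(x_b)}/\varpi_L^{2b-em}\cong T_{k,2\lambda}/\varpi_L^{2b-em}$; taking the inverse limit over $b$ then produces
\begin{equation*}
L\otimes_{\oL}\varprojlim_n\mathbf V(E^0/\varpi_L^n E^0)\cong V_{k,2\lambda}\otimes(\chi|\chi|).
\end{equation*}
Topological irreducibility of $E$ follows from the exactness of $\mathbf V$: any proper closed $G$-invariant subspace would produce a proper $\GG_{\Qp}$-stable subspace of the irreducible $V_{k,2\lambda}\otimes(\chi|\chi|)$. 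I expect the main technical hurdle to be the last alignment step: showing that the specific lattice produced by Colmez's functor inside each $V_{k,a_p(x_b)}\otimes(\chi|\chi|)$ is homothetic to the Berger-Li-Zhu lattice $T_{k,a_p(x_b)}$, so that the trace-character congruences of Proposition \ref{congruence} really translate into the integral isomorphisms needed to identify the inverse limit with $V_{k,2\lambda}\otimes(\chi|\chi|)$; this requires using the irreducibility of $V_{k,a_p}$ and the compatibility of Colmez's correspondence with central-character twists.
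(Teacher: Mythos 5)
Your overall strategy is the same as the paper's: approximate $\pi(\chi,\chi|\centerdot|^{-1})\otimes W$ by the non-degenerate $\pi(\chi\delta_{x^{-1}},\chi\delta_x|\centerdot|^{-1})\otimes W$ via Proposition \ref{adjust}, apply Berger--Breuil there, transport through $\mathbf V$, and use the Berger--Li--Zhu congruences to identify the Galois limit. However, there are three real gaps where your argument does not close.

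First, the assertion that the $\varpi_L$-adic completion $\widehat{H_0(\mathcal D)}$ is ``commensurable with $E^0$'' is not justified and is not a consequence of Vign\'eras's results; $H_0(\mathcal D)\subseteq E^0$ by construction, but there is no a priori bound in the other direction. The paper does not assume this: it first proves that $E':=\widehat{M}\otimes_{\oL}L$ (with $M=H_0(\mathcal D)$) is topologically irreducible and admissible, then observes that since $M$ is a finitely generated $\oL[G]$-module generating $\pi\otimes W$ over $L$, the completion $E'$ is the \emph{universal} unitary completion (\cite[Prop.~1.17]{emerton}); this supplies a nonzero $G$-map $E'\to E$, which is an isomorphism because $E'$ is irreducible and $\pi\otimes W$ is dense in $E$. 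Without this step you cannot pass information about $M/\varpi^n M$ to $E^0/\varpi^n E^0$, so both your admissibility and irreducibility claims for $E$ are unsupported.

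Second, the irreducibility argument ``any proper closed $G$-invariant subspace would produce a proper $\GG_{\Qp}$-stable subspace'' is too quick: you need to rule out (i) that a nonzero closed subspace reduces to something annihilated by $\mathbf V$, and (ii) that it reduces to something with $\mathbf V$ equal to the whole $2$-dimensional module. The paper does this using the explicit mod-$p$ structure from Berger--Li--Zhu and \cite[Thm.~A]{berger}: when $p+1\nmid k-1$ the reduction $M\otimes_{\oL}k_L$ is irreducible supersingular and there is nothing to check; when $p+1\mid k-1$ any nonzero proper $E_1^0\otimes k_L$ is forced to be an irreducible principal series, whose image under $\mathbf V$ is $1$-dimensional by \cite[IV.4.17]{colmez1}, and that contradicts irreducibility of $V_{k,a_p}$. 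This case analysis cannot be elided.

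Third, your final identification of the inverse limit proceeds by trying to match the Colmez lattice with $T_{k,a_p(x_b)}$ and then ``taking the inverse limit over $b$''; this is both unnecessary and does not parse (the inverse limit is over $n$, and the comparison with $T_{k,a_p(x_b)}$ gives isomorphisms of finite quotients, not lattice homotheties). You have flagged the lattice-homothety issue as ``the main technical hurdle,'' but the paper sidesteps it entirely: from $\mathbf V(M/\varpi_L^{a+j}M)\cong T_{k,a_p(j)}\otimes\oL/\pL^{a+j}$ one compares \emph{trace characters}, obtaining $\chi_V\equiv\chi_{k,a_p(j)}\equiv\chi_{k,a_p}\pmod{\pL^{a+j}}$ for all $j$, hence $\chi_V=\chi_{k,a_p}$, and concludes $V\cong V_{k,a_p}$ by absolute irreducibility. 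No statement about which lattice $\mathbf V$ produces is needed.
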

\begin{proof} Since the character $\chi|\chi|$ is integral, by twisting we may assume that $\chi$ is unramified. We denote the diagram 
$$\pi(\chi, \chi|\centerdot|^{-1})^{I_1}\otimes W\hookrightarrow \pi(\chi, \chi|\centerdot|^{-1})^{K_1}\otimes W$$
 by $D=(D_1\hookrightarrow D_0)$. Let $\mathcal D=(\mathcal D_1\hookrightarrow \mathcal D_0)$ be the diagram of $\oL$-modules 
with $\mathcal D_1=D_1\cap E^0$ 
and $\mathcal D_0=D_0\cap E^0$. Let $a\ge 1$ be the integer given by Proposition \ref{adjust}, for each $j\ge 0$, we fix $x_j\in 1+\pL^{a+j}$, $x_j\neq 1$ and a finitely 
generated $\oL[G]$-submodule $M_j$ in $\pi(\chi\delta_{x_j^{-1}}, \chi\delta_{x_j} |\centerdot|^{-1})\otimes W$, (which is then a free $\oL$-module), such that 
$$H_0(\mathcal D)\otimes_{\oL} \oL/\pL^{a+j}\cong M_j\otimes_{\oL} \oL/\pL^{a+j}.$$   
This is possible by Proposition \ref{adjust}. To ease the notation we set $M:=H_0(\mathcal D)$. Let $a_p(j):=\lambda x_j^{-1}+\lambda x_j$, $a_p:=2\lambda$ and 
let $m$ be the largest integer, such that $m\le (k-2)/(p-1)$. 
Since $p>2$, $x_j+x^{-1}_j$ is a unit in $\oL$, and so $\val(a_p(j))=\val(a_p)=(k-1)/2> m$. (Here we really need $p>2$.) 
Moreover, we have $a_p\equiv a_p(j) 
\pmod{\pL^{j+a+ em}}$, where $e:=e(L/\Qp)$ is the ramification index. Now since $x_j\neq 1$ we get that $\lambda x_j\neq \lambda x_j^{-1}$, and hence we may apply the 
results of Berger-Breuil to $\pi(\chi\delta_{x_j^{-1}}, \chi\delta_{x_j} |\centerdot|^{-1})\otimes W$. Let 
$T_{k,a_p(j)}:=\underset{\longleftarrow}{\lim}\ \mathbf V(M_j/\varpi_L^n M_j)$. Then 
\eqref{vkap} gives that $T_{k,a_p(j)}$ is a $\GG_{\Qp}$-invariant lattice in $V_{k,a_p(j)}$. Since $M\otimes_{\oL} \oL/\pL^{a+j}\cong M_j\otimes_{\oL} \oL/\pL^{a+j}$ 
we get 
\begin{equation}\label{congT}
\mathbf V(M/\varpi_L^{a+j} M)\cong \mathbf V(M_j/\varpi_L^{a+j} M_j)\cong T_{k,a_p(j)}\otimes_{\oL} \oL/\pL^{a+j}.
\end{equation}
Set $V:=L\otimes_{\oL} \underset{\longleftarrow}{\lim} \ \mathbf V(M/\varpi_L^n M)$. 
Then \eqref{congT} implies that $V$ is a $2$-dimensional $L$-vector space. Let $\chi_V$
be the trace character of $V$, then it follows from \eqref{congT} that 
$\chi_V \equiv \chi_{k,a_p(j)} \pmod{\pL^{a+j}}$. Since $a_p\equiv a_p(j)\pmod{\pL^{a+j+em}}$, 
Proposition \ref{congruence} says that 
$\chi_{k,a_p}\equiv \chi_{k,a_p(j)} \pmod{\pL^{a+j}}$. We obtain $\chi_V \equiv \chi_{k,a_p} \pmod{\pL^{a+j}}$, for all $j\ge 0$.
This gives us $\chi_V=\chi_{k,a_p}$. Since $V_{k,a_p}$ is irreducible, the equality of characters implies $V\cong V_{k,a_p}$.

Set $\widehat{M}:=\underset{\longleftarrow}{\lim} \ M/\varpi_L^n M$, and $E':=\widehat{M}\otimes_{\oL} L$. Since $M$ is a free $\oL$-module, 
we get an injection 
$M\hookrightarrow \widehat{M}$. In particular $E'$ contains $\pi(\chi, \chi|\centerdot|^{-1})\otimes W$ as a dense $G$-invariant subspace.
We claim that $E'$ is a topologically irreducible and admissible $G$-representation. Now \cite[Thm.4.1.1, Prop.4.1.4]{bergerlizhu} 
say that the semi-simplification of  
$T_{k, a_p(j)}\otimes_{\oL} k_L$ is  irreducible if $p+1 \nmid k-1$  and  isomorphic to 
$\begin{pmatrix} \mu_{\sqrt{-1}} & 0 \\ 0 & \mu_{-\sqrt{-1}}\end{pmatrix} \otimes \omega^{(k-1)/(p+1)}$, if $p+1 | k-1$, where 
$\mu_{\pm\sqrt{-1}}$ is the unramified character 
sending arithmetic Frobenius to $\pm \sqrt{-1}$, and $\omega$ is the cyclotomic character. Then \cite[Thm A]{berger} 
implies that if  
$p+1\nmid k-1$ then 
$M_j \otimes_{\oL} k_L$ is an irreducible supersingular representation of $G$, and 
if $p+1 | k-1$ then the semi-simplification of $M_j\otimes_{\oL} k_L$ is a direct sum 
of two irreducible principal series. The irreducibility of principal series follows from \cite[Thm. 33]{bl}, since 
$\sqrt{-1}\neq \pm 1$, as $p>2$. Since 
$M\otimes_{\oL} k_L\cong M_j\otimes_{\oL} k_L$, we get that $M\otimes_{\oL} k_L$ is an admissible  representation of $G$ 
(so that for every open subgroup $\mathcal U$ 
of $G$, the space of $\mathcal U$-invariants is finite dimensional). This implies that  $E'$ is admissible.     

Suppose that $E_1$ is a closed $G$-invariant subspace of $E'$ with $E'\neq E_1$. Let $E_1^0:= E_1\cap \widehat{M}$. 
We obtain a $G$-equivariant  injection 
$E_1^0\otimes_{\oL} k_L\hookrightarrow M\otimes_{\oL} k_L$. 
If $E_1^0\otimes_{\oL} k_L=0$ or $M\otimes_{\oL} k_L$ then Nakayama's lemma gives $E_1^0=0$ and 
$E_1^0=\widehat{M}$, respectively. If $p+1\nmid k-1$ then $M\otimes_{\oL} k_L$ is irreducible and we are done. 
If $p+1 | k-1$ then $E_1^0\otimes_{\oL} k_L$ is an 
irreducible principal series, and so $\mathbf V(E_1^0\otimes_{\oL} k_L)$ is one dimensional,
\cite[IV.4.17]{colmez1}. 
But then  
$V_1:=L\otimes_{\oL} \underset{\longleftarrow}{\lim} \ \mathbf V(E_1^0/\varpi_L^n E_1^0)$
is a $1$-dimensional subspace of $V_{k, a_p}$ stable under the action of $\GG_{\Qp}$. Since $V_{k,a_p}$ is irreducible we obtain a contradiction.

Since $E'$ is a completion of $\pi(\chi, \chi|\centerdot|^{-1})\otimes W$ with respect to a finitely generated $\oL[G]$-submodule, it is the universal completion, see 
eg \cite[Prop. 1.17]{emerton}. 
In particular, we obtain a non-zero $G$-equivariant map of $L$-Banach space representations $E'\rightarrow E$, but since $E'$ is irreducible and  
$\pi(\chi, \chi|\centerdot|^{-1})\otimes W$ is dense in $E$, this map is an isomorphism. 

\end{proof}

\begin{cor}\label{univfg} Assume that $p>2$, and let $\chi:\Qp^{\times}\rightarrow L^{\times}$ a smooth character with 
$\chi(p)^2p^{k-1}=1$. Assume that there exists a $G$-invariant norm $\|\centerdot\|$ on 
$\pi(\chi, \chi|\centerdot|^{-1})\otimes W$, where $W:=\Sym^{k-2} L^2$. Then every bounded $G$-invariant $\oL$-lattice 
in $\pi(\chi, \chi|\centerdot|^{-1})\otimes W$ is finitely generated as an $\oL[G]$-module.
\end{cor}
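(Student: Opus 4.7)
The plan is to show that every such $\Lambda$ is commensurable, as an $\oL$-submodule of $\pi:=\pi(\chi,\chi|\centerdot|^{-1})\otimes W$, with the finitely generated $\oL[G]$-submodule $M\subseteq \pi$ constructed in the proof of Theorem \ref{main}, and then to deduce finite generation of $\Lambda$ from the fact that each $M/\varpi_L^n M$ has finite length over $\oL[G]$.

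Since $\chi(p)^2 p^{k-1}=1$ forces $\chi(p)^{-1}=\pm p^{(k-1)/2}$, Theorem \ref{main} applies: $E$ is topologically irreducible and admissible, and the final paragraph of that proof (via \cite[Prop.\ 1.17]{emerton}) shows that $E$ is the universal unitary completion of $\pi$ and is isomorphic, as a Banach space representation, to $\widehat{M}[1/p]$. Given any bounded $G$-invariant $\oL$-lattice $\Lambda\subseteq\pi$, write $\|\centerdot\|_\Lambda$ for its gauge norm and $B_\Lambda$ for the corresponding completion of $\pi$, which is a unitary Banach representation of $G$. The universal property supplies a continuous $G$-equivariant extension $E\to B_\Lambda$ of the identity on $\pi$, and its continuity yields $\|\centerdot\|_\Lambda\le C_1\|\centerdot\|$ on $\pi$. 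On the other hand, the hypothesis that $\Lambda$ is bounded relative to $\|\centerdot\|$ gives the opposite inequality $\|\centerdot\|\le C_2\|\centerdot\|_\Lambda$, so the two norms are equivalent. Since $\|\centerdot\|$ is in turn equivalent to the norm $\|\centerdot\|_M$ whose unit ball is $M$, commensurability of unit balls then yields integers $a,b\ge 0$ with
$$\varpi_L^a M\subseteq\Lambda\subseteq\varpi_L^{-b}M.$$

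Finally I would invoke \cite[Thm A]{berger} (already used just before Theorem \ref{main}): the $\oL[G]$-module $M/\varpi_L^{a+b}M$ has finite length and is therefore Noetherian. Multiplication by $\varpi_L^b$ identifies $\varpi_L^{-b}M/\varpi_L^a M$ with $M/\varpi_L^{a+b}M$, so its $\oL[G]$-submodule $\Lambda/\varpi_L^a M$ is finitely generated; combining lifts of a finite generating set with finitely many $\oL[G]$-generators of $\varpi_L^a M$ produces a finite system of $\oL[G]$-generators of $\Lambda$. I expect the main subtlety to be the equivalence $\|\centerdot\|_\Lambda\sim\|\centerdot\|$: one direction is essentially free from the universal property of $E$, while the other genuinely uses that $\Lambda$ is a bounded subset of $\pi$ in the given Banach norm.
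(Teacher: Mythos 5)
Your proof is correct and takes essentially the same route as the paper, whose own proof simply invokes Theorem \ref{main} and then refers to the proof of \cite[Cor.~5.3.4]{bergerbreuil}, which runs along the lines you spell out: equivalence of the gauge norm of the bounded lattice with the universal norm (via the universal property of $E$ and boundedness), hence commensurability with the finitely generated lattice $M=H_0(\mathcal D)$, and then finite generation from the finite length of $M/\varpi_L^{a+b}M$ supplied by \cite[Thm A]{berger}. The only detail you leave implicit is that the equivalence of $\|\centerdot\|$ with the gauge norm of $M$ rests on the open mapping theorem applied to the isomorphism $E'\rightarrow E$ obtained at the end of the proof of Theorem \ref{main}.
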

\begin{proof} The existence of a $G$-invariant norm implies that the universal completion is non-zero. It follows from
Theorem \ref{main} that the universal completion is topologically irreducible and admissible. The assertion follows from the proof of 
\cite[Cor. 5.3.4]{bergerbreuil}.
\end{proof} 

For the purposes of \cite{prep} we record the following corollary to the proof of Theorem \ref{main}.

\begin{cor} Assume $p>2$, and let $\chi:\Qp^{\times} \rightarrow L^{\times}$ be a smooth character, such that 
$\chi^2(p)p^{k-1}$ is a unit in $\oL$. 
Assume there exists a unitary $L$-Banach space representation $(E,\|\centerdot\|)$ of $G$ containing 
$(\Indu{B}{G}{\chi\otimes \chi|\centerdot|^{-1}})\otimes \Sym^{k-2} L^2$ as a dense $G$-invariant subspace, such that 
$\|E\|\subseteq |L|$. Then there exists $x\in 1+\pL$, $x^2\neq 1$ and a unitary completion 
$E_x$ of $(\Indu{B}{G}{\chi\delta_x\otimes \chi\delta_{x^{-1}}|\centerdot|^{-1}})\otimes \Sym^{k-2} L^2$, 
such that $E^0\otimes_{\oL} k_L \cong E^0_x \otimes_{\oL} k_L$, 
where $E^0_x$ is the unit ball in $E_x$ and $E^0$ is the unit ball in $E$.
\end{cor}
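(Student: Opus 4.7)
The strategy is to apply Proposition~\ref{adjust} to the integral diagram extracted from $E^0$, and to build $E_x$ as the $\varpi_L$-adic completion of the resulting $\oL[G]$-module. Write $\pi:=\Indu{B}{G}{\chi\otimes \chi|\centerdot|^{-1}}$ and $W:=\Sym^{k-2} L^2$, and set $N:=E^0\cap(\pi\otimes W)$. After twisting by a suitable unramified character we may assume $\chi(p)^2 p^{k-1}=1$, bringing us into the setting of Theorem~\ref{main}; then Corollary~\ref{univfg} ensures that $N$ is finitely generated as an $\oL[G]$-module. Choose a finite set $S$ of generators of $N$, and fix $c\ge 1$ large enough so that $\chi$ is trivial on $1+p^c\Zp$ and so that $S\subseteq \pi^{K_c}\otimes W$ (possible by smoothness of $\pi$). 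Let $D=(D_1\hookrightarrow D_0):=(\pi^{I_c}\otimes W\hookrightarrow \pi^{K_c}\otimes W)$ and $\mathcal D_i:=D_i\cap E^0$, so $S\subseteq \mathcal D_0$. By the Vign\'eras results recalled in the excerpt, $M:=H_0(\mathcal D)$ is a finitely generated free $\oL$-module which injects into $\pi\otimes W$ as the $\oL[G]$-span of $\mathcal D_0$. Since $N=\oL[G]\cdot S\subseteq \oL[G]\cdot\mathcal D_0=M\subseteq N$, we have $M=N$. Using $\|E\|\subseteq|L|$ one checks that $\varpi_L^n E^0\cap N=\varpi_L^n N$ for all $n\ge 0$, and combined with the density of $N$ in $E^0$ (inherited from density of $\pi\otimes W$ in $E$) this yields $\widehat M\cong E^0$ and hence
$$E^0\otimes_{\oL} k_L\cong M/\varpi_L M.$$

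Now apply Proposition~\ref{adjust} to produce an integer $a\ge 1$. Choose $y\in 1+\pL^a$ with $y\ne 1$, and set $x:=y^{-1}\in 1+\pL$; since $p>2$, $-1\notin 1+\pL$, so $x^2\ne 1$. The proposition supplies a finitely generated $\oL[G]$-module $M_x\subset \pi(\chi\delta_x,\chi\delta_{x^{-1}}|\centerdot|^{-1})\otimes W$, free over $\oL$, with $M_x\otimes_{\oL}\oL/\pL^a\cong M\otimes_{\oL}\oL/\pL^a$; in particular $M_x/\varpi_L M_x\cong M/\varpi_L M$. Define $E_x:=L\otimes_{\oL}\widehat{M_x}$ with $\widehat{M_x}:=\varprojlim_n M_x/\varpi_L^n M_x$; this is a unitary $L$-Banach space representation of $G$ with $\|E_x\|\subseteq|L|$, containing $\pi(\chi\delta_x,\chi\delta_{x^{-1}}|\centerdot|^{-1})\otimes W$ as a dense $G$-invariant subspace, and with unit ball $E_x^0=\widehat{M_x}$. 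Combining,
$$E_x^0\otimes_{\oL} k_L = M_x/\varpi_L M_x\cong M/\varpi_L M\cong E^0\otimes_{\oL} k_L,$$
giving the required isomorphism of $k_L[G]$-modules.

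The main obstacle is the identification $\widehat M\cong E^0$, which splits into the two statements $M=N$ (requiring the finite generation of $N$ over $\oL[G]$ from Corollary~\ref{univfg}, so that a finite set of generators can be captured by taking $c$ large enough) and $\widehat N\cong E^0$ (which uses the hypothesis $\|E\|\subseteq|L|$ and the density of $\pi\otimes W$ in $E$). Both rest on results already proved earlier in the paper, so once this identification is in place, the proof reduces to the diagrammatic construction of Proposition~\ref{adjust}.
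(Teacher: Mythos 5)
Your proof is correct and follows essentially the same line as the paper's: identify $E^0\otimes_{\oL}k_L$ with $M/\varpi_L M$ for $M=E^0\cap(\pi\otimes W)$ using $\|E\|\subseteq|L|$ and density, invoke Corollary~\ref{univfg} to choose a finite generating set contained in some $\pi^{K_c}\otimes W$, realise $M$ as $H_0$ of the resulting integral diagram, and then feed this into Proposition~\ref{adjust} and form the completion of the module it produces. You spell out the final step (the construction of $E_x:=L\otimes_{\oL}\widehat{M_x}$ and the verification that it is a genuine unitary completion with unit ball $\widehat{M_x}$), which the paper compresses into ``the assertion follows from the proof of Theorem~\ref{main}.'' The one place you should be a touch more careful is the opening reduction ``after twisting by a suitable unramified character we may assume $\chi(p)^2p^{k-1}=1$'': to find an unramified unitary $\eta$ with $\eta(p)^2=\chi(p)^{-2}p^{-(k-1)}$ you may need a square root, i.e.\ a finite extension of $L$, and one should note that the desired conclusion over $k_L$ descends from the conclusion over the residue field of the extension. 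The paper glosses over exactly the same point by applying Corollary~\ref{univfg} (stated for $\chi(p)^2p^{k-1}=1$) under the weaker unit hypothesis, so this is not a defect peculiar to your argument, but it is worth a sentence.
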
 
\begin{proof} Let $\pi:=\Indu{B}{G}{\chi\otimes \chi|\centerdot|^{-1}}$ and 
$M:=(\pi\otimes W)\cap E^0$. Now 
$M\cap \varpi_L E^0=(\pi\otimes W)\cap \varpi_L E^0= \varpi_L M.$
So we have a $G$-equivariant injection $\iota:M/\varpi_L M\hookrightarrow E^0/\varpi_L E^0$. We claim that $\iota$ 
is a surjection. Let $v\in E^0$, since $\pi\otimes W$ is dense in $E$, there exists a sequence $\{v_n\}_{n\ge 1}$ 
in  $\pi\otimes W$ such that $\lim v_n =v$. We also have $\lim \|v_n\|=\|v\|$. Since $\|E\|\subseteq |L|\cong \mathbb Z$, there
exists $m\ge 0$ such that $v_n\in M$, for all $n\ge m$. This implies surjectivity of $\iota$. So we get $M\otimes_{\oL} k_L\cong 
E^0\otimes_{\oL} k_L$. 

By Corollary \ref{univfg} we may find $u_1,\ldots, u_n \in M$ which generate
$M$ as an $\oL[G]$-module. Further, $u_i=\sum_{j=1}^{m_i} v_{ij}\otimes w_{ij}$ with $v_{ij}\in \pi$ and $w_{ij}\in W$. Since
$\pi$ is a smooth representation of $G$ there exists an integer $c\ge 1$ such that $v_{ij}$ is fixed by $K_c$ for all 
$1\le i\le n$, $1\le j\le m_i$. Set 
$$\DD:=((\pi^{I_c}\otimes W)\cap M \hookrightarrow (\pi^{K_c}\otimes W)\cap M), 
\quad D:= (\pi^{I_c}\otimes W \hookrightarrow \pi^{K_c}\otimes W)$$
 and let $M'$ be the image of $H_0(\DD)\hookrightarrow H_0(D)\cong \pi\otimes W$. It follows from \eqref{homology} that $M'$ is generated by 
$(\pi^{K_c}\otimes W)\cap M$ as an $\oL[G]$-module. Hence, $M'\subseteq M$. By construction $(\pi^{K_c}\otimes W)\cap M$
contains $u_1, \ldots u_n$, and so $M\subseteq M'$. In particular, $H_0(\DD)\otimes_{\oL} k_L \cong M\otimes_{\oL} k_L$. The assertion 
follows from the proof of Theorem \ref{main}.
\end{proof}  

\section{Existence}\label{exist}
Recent results of Colmez, which appeared after the first version of this  note, imply the existence of a $G$-invariant norm on
$(\Indu{B}{G}{\chi\otimes \chi|\centerdot|^{-1}})\otimes \Sym^{k-2}L^2$, $\chi^2(p)p^{k-1}\in \oL^{\times}$, thus making 
our results unconditional. We briefly explain this. 

We continue to assume $p>2$, $k\ge 2$ an integer and $a_p=2p^{(k-1)/2}$. The representation $V_{k, a_p}$ of $\mathcal G_{\Qp}$ 
sits in the $p$-adic family 
of Berger-Li-Zhu, \cite[3.2.5]{bergerlizhu}. Moreover, all the other points in the family 
correspond to the crystalline representations with distinct Frobenius eigenvalues, to which the theory of \cite{bergerbreuil}
applies.  Hence \cite[II.3.1, IV.4.11]{colmez1} implies that there exists an irreducible 
unitary $L$-Banach space representation $\Pi$ of $\GL_2(\Qp)$, such that $\mathbf V (\Pi)\cong V_{k, a_p}$. If $p\ge 5$ or $p=3$ and 
$k\not\equiv 3\pmod{8}$ and $k\not\equiv 7 \pmod{8}$, the existence of such $\Pi$ also follows from
\cite{kisin}. It follows from \cite[VI.6.46]{colmez1} that the set of locally algebraic vectors $\Pi^{alg}$ of $\Pi$ 
is isomorphic to $(\Indu{B}{G}{\chi\otimes \chi|\centerdot|^{-1}})\otimes \Sym^{k-2}L^2$, where $\chi: \Qp^{\times}\rightarrow L^{\times}$
is an unramified character with $\chi(p)=p^{-(k-1)/2}$. The  restriction of  the $G$-invariant norm of $\Pi$ to $\Pi^{alg}$ solves the problem.
Moreover, if $\delta:\Qp^{\times}\rightarrow L^{\times}$ is a unitary character then we also obtain a $G$-invariant norm on 
$\Pi^{alg}\otimes\delta\circ \det$.

\end{document}